\newcommand{\Z}{{{\mathbb{Z}}}}
\newcommand{\N}{{{\mathbb{N}}}}
\newcommand{\cL}{{{\mathcal{L}}}}
\newcommand{\cC}{{{\mathcal{C}}}}
\newcommand{\MCG}{{\operatorname{MCG}}}
\newtheorem{thm}{Theorem}
\newtheorem{lem}[thm]{Lemma}
\theoremstyle{definition}		
\newtheorem{defn}[thm]{Definition}
\newtheorem{alg}[thm]{Algorithm}
\theoremstyle{remark}
\newcommand{\subjclass}[2]{%
  \let\@oldtitle\@title%
  \gdef\@title{\@oldtitle\footnotetext{#1 \emph{Mathematics subject classification.} #2}}%
}
\newcommand{\keywords}[1]{%
  \let\@@oldtitle\@title%
  \gdef\@title{\@@oldtitle\footnotetext{\emph{Key words and phrases.} #1.}}%
}
\title{Counting components of an integral lamination}
\author{S. \"Oyk\"u Yurtta\c s\footnote{Dicle University, Science
    Faculty, Mathematics Department, 21280, Diyarbak\i r, Turkey,
    e-mail: saadet.yurttas@dicle.edu.tr}\ ~and Toby
  Hall\footnote{Department of Mathematical Sciences, University of
    Liverpool, Liverpool L69 7ZL, UK,  e-mail: T.Hall@liverpool.ac.uk}\ }
\date{January 2016}
\begin{document}
\subjclass{2010}{57M50, 57N05, 20F36}
\maketitle

\begin{abstract}
We present an efficient algorithm for calculating the number of
components of an integral lamination on an $n$-punctured disk, given
its Dynnikov coordinates. The algorithm requires $O(n^2M)$ arithmetic
operations, where~$M$ is the sum of the absolute values of the
Dynnikov coordinates.
\end{abstract}

\section{Introduction}
Systems of mutually disjoint essential simple closed curves have
played a central r\^ole in the study of mapping class groups of
surfaces since the work of Dehn. Such systems are usually described
combinatorially using techniques such as train tracks or the
Dehn--Thurston coordinate system~\cite{penner}. Given such a
combinatorial description, it can be difficult to determine even
elementary properties of the system, such as the number of curves
which it contains.

In the case where the surface is an $n$-punctured disk~$D_n$, a
particularly beautiful description of such systems of curves -- or
{\em integral laminations} -- is given by the Dynnikov coordinate
system~\cite{D02}, which provides an explicit bijection from the set
of integral laminations on~$D_n$ to $\Z^{2n-4}\setminus\{0\}$.

In the case~$n=3$, the Dynnikov coordinates of an integral lamination
consist of a pair of integers, and the number of connected components
of the lamination is the greatest common divisor of these integers. No
analogous formula is known when~$n\ge 4$.

In this paper we describe an algorithm for calculating the number of
components of an integral lamination from its Dynnikov
coordinates. The algorithm proceeds by the repeated application of
three moves, each of which simplifies the lamination and either leaves
the number of components unchanged, or reduces it by a known
amount. The algorithm can be seen as complementary to that of Dynnikov
and Wiest~\cite{dynn-wiest}, which works with {\em interval identification
  systems}: combinatorial descriptions of a rather different
nature, ideally suited to their goal of comparing algebraic and
geometric notions of braid complexity. An algorithm similar to that of
Dynnikov and Wiest was given earlier by Haas and
Susskind~\cite{Haas-Susskind}, in the context of their study of the
number of curves in a system on a genus two surface described by
an integral-weighted train track.

The three moves are described, and their properties are analysed, in
Lemmas~\ref{lem:fill-puncture},~\ref{lem:erase-simple},
and~\ref{lem:untwist}, before the algorithm itself
(Algorithm~\ref{alg:algorithm}) is presented. In order to ease
implementation, the formal descriptions of the moves and the algorithm
are entirely in terms of Dynnikov coordinates rather than topological
properties of the corresponding laminations.  This method of
presentation also makes it straightforward to analyse the complexity
of the algorithm (Lemma~\ref{lem:complexity}): calculating the number
of components of an integral lamination on the $n$-punctured disk
requires $O(n^2M)$ arithmetic operations, where $M$ is the sum of the
absolute values of the Dynnikov coordinates. Here an {\em arithmetic
  operation} means adding, subtracting, comparing, taking the maximum,
or taking the minimum of two integers, each of size $O(n^2M^2)$.

The algorithm has been implemented as part of the second author's
program {\tt Dynn}, available at
\url{http://pcwww.liv.ac.uk/maths/tobyhall/software/}. In addition to
having good theoretical complexity, the algorithm is efficient in
practice. Calculating the number of components of~10000 integral
laminations on $D_{10}$ with randomly generated Dynnikov coordinates
between $-10$ and $10$, $-1000$ and $1000$, and $-100000$ and $100000$
took an average of $0.000089$, $0.00033$, and $0.00099$ seconds per
lamination on a standard notebook PC with an Intel i5 processor. On
$D_{100}$, the corresponding times were $0.0013$, $0.010$, and $0.058$
seconds per lamination.

\section{Preliminaries}
\label{sec:preliminaries}
\subsection{Integral laminations on the punctured disk}
Let $n\ge 3$, and let $D_n$ be a standard model of the $n$-punctured
disk in the plane, with the punctures arranged along the horizontal
diameter. A simple closed curve in $D_n$ is {\em inessential} if it
bounds an unpunctured disk, a once-punctured disk, or an $n$-punctured
disk, and is {\em essential} otherwise.

An {\em integral lamination} $\cL$ in $D_n$ is a non-empty union of
pairwise disjoint unoriented essential simple closed curves in $D_n$,
up to isotopy. We write $\cL_n$ for the set of integral laminations on
$D_n$.

Given an integral lamination $\cL$, we write $X(\cL)\ge 1$ for the
number of components of a representative of $\cL$. The aim
of this paper is to describe an algorithm for calculating $X(\cL)$
from the {\em Dynnikov coordinates} of $\cL$.

\subsection{The Dynnikov coordinate system}
The Dynnikov coordinate system~\cite{D02} provides, for each $n\ge 3$,
a bijection $\rho\colon \cL_n\to \Z^{2n-4} \setminus\{0\}$, which we
now define.

Construct {\em Dynnikov arcs} $\alpha_i$ ($1\le i\le 2n-4$) and
$\beta_i$ ($1\le i\le n-1$) in $D_n$ as depicted in
Figure~\ref{fig:dynn-arcs}. Given $\cL\in\cL_n$, let $L$ be a
representative of $\cL$ which intersects each of these arcs minimally
(such an~$L$ is called a {\em minimal representative} of $\cL$). Write
$\alpha_i$ (respectively $\beta_i$) for the number of intersections of
$L$ with the arc $\alpha_i$ (respectively the arc $\beta_i$). This
overload of notation will not give rise to any ambiguity, since it will
always be stated explicitly when the symbols $\alpha_i$ and $\beta_i$
refer to arcs rather than to integers.

\begin{figure}[htbp]
\begin{center}
\psfrag{1}[tl]{\begin{turn}{-90}$\scriptstyle{\alpha_1}$\end{turn}} 
\psfrag{b1}[tl]{\begin{turn}{-90}$\scriptstyle{\beta_1}$\end{turn}} 
\psfrag{b2}[tl]{\begin{turn}{-90}$\scriptstyle{\beta_{i+1}}$\end{turn}} 
\psfrag{bi}[tl]{\begin{turn}{-90}$\scriptstyle{\beta_{i}}$\end{turn}}
\psfrag{b6}[tl]{\begin{turn}{-90}$\scriptstyle{\beta_{n-1}}$\end{turn}} 
\psfrag{2i}[tl]{\begin{turn}{-90}$\scriptstyle{\alpha_{2i+2}}$\end{turn}}
\psfrag{2}[tl]{\begin{turn}{-90}$\scriptstyle{\alpha_2}$\end{turn}} 
\psfrag{2i5}[tl]{\begin{turn}{-90}$\scriptstyle{\alpha_{2i-3}}$\end{turn}}
\psfrag{2i2}[tl]{\begin{turn}{-90}$\scriptstyle{\alpha_{2i}}$\end{turn}}
\psfrag{2i3}[tl]{\begin{turn}{-90}$\scriptstyle{\alpha_{2i-1}}$\end{turn}}
\psfrag{2i1}[tl]{\begin{turn}{-90}$\scriptstyle{\alpha_{2i+1}}$\end{turn}}
\psfrag{2i4}[tl]{\begin{turn}{-90}$\scriptstyle{\alpha_{2i-2}}$\end{turn}}
\psfrag{n4}[tl]{\begin{turn}{-90}$\scriptstyle{\alpha_{2n-4}}$\end{turn}} 
\psfrag{n5}[tl]{\begin{turn}{-90}$\scriptstyle{\alpha_{2n-5}}$\end{turn}}
\psfrag{a}{$\scriptstyle{1}$} 
\psfrag{b}{$\scriptstyle{2}$} 
\psfrag{c}{$\scriptstyle{i}$} 
\psfrag{d}{$\scriptstyle{i+1}$} 
\psfrag{e}[r]{$\scriptstyle{i+2}$} 
\psfrag{f}{$\scriptstyle{n-1}$} 
\psfrag{g}{$\scriptstyle{n}$} 
\includegraphics[width=0.9\textwidth]{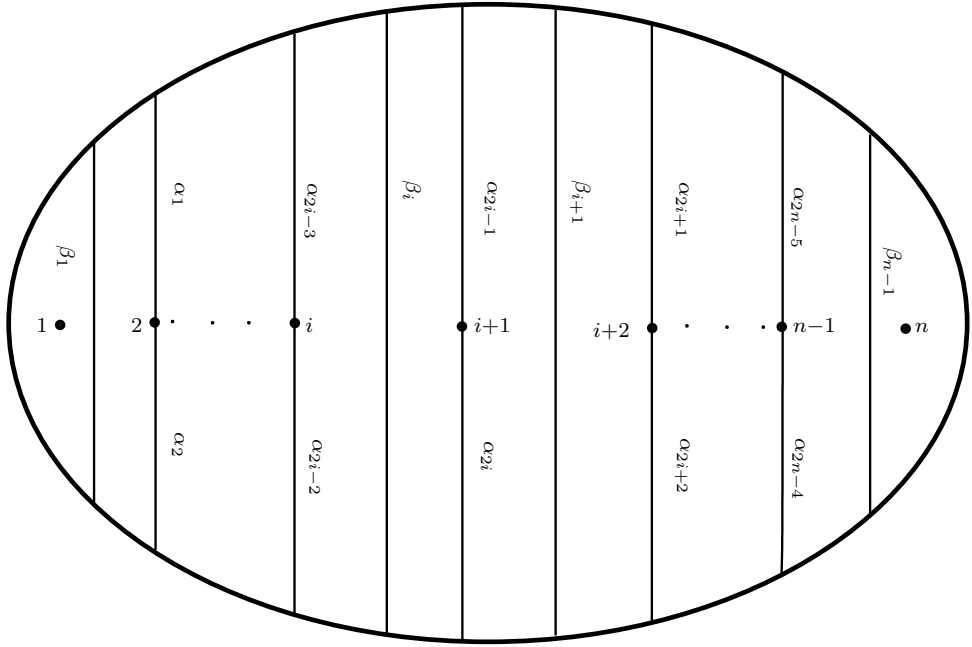}
\caption{The arcs $\alpha_i$ and $\beta_i$}
\label{fig:dynn-arcs}
\end{center}
\end{figure}


The {\em Dynnikov coordinate function}
$\rho\colon\cL_n\to\Z^{2n-4}\setminus\{0\}$ is defined by 
\[
\rho(\cL) = (a;\,b) = (a_1,\ldots,a_{n-2};\,b_1,\ldots,b_{n-2}),
\]
where 
\begin{equation}
\label{eq:dynn-coords}
a_i = \frac{\alpha_{2i}-\alpha_{2i-1}}{2} \qquad\text{and}\qquad
b_i = \frac{\beta_i - \beta_{i+1}}{2} \qquad
\end{equation}
for $1\le i\le n-2$.

The intersection numbers $\alpha_i$ and $\beta_i$ (and hence the
integral lamination~$\cL$) can be recovered from the
Dynnikov coordinates $(a;\,b)$ using the following formulae (see for
example~\cite{paper1}): 
\begin{align}
\label{eq:beta}
\beta_i &= 2\max_{1\le k\le n-2} \left( |a_k| + b_k^+ +
\sum_{j=1}^{k-1}b_j \right) - 2\sum_{j=1}^{i-1} b_j, 
\\
\label{eq:alpha}
\alpha_i &=
\left\{
\begin{array}{ll}
(-1)^i a_{\lceil i/2\rceil} + \frac{\beta_{\lceil i/2\rceil}}{2} &
  \quad\text{ if }b_{\lceil i/2\rceil} \ge 0, \\
(-1)^i a_{\lceil i/2\rceil} + \frac{\beta_{1+\lceil i/2\rceil}}{2} &
  \quad\text{ if }b_{\lceil i/2\rceil} \le 0. \\
\end{array}
\right.
\end{align}
Here $x^+$ denotes $\max(x,0)$, and $\lceil x\rceil$ denotes the
smallest integer which is not less than~$x$.

\medskip\medskip

We next mention some relevant facts about the interpretation of the
Dynnikov coordinates. Let $1\le i\le n-2$, and let $\Delta_i$ denote
the subset of~$D_n$ bounded by the arcs $\beta_i$ and
$\beta_{i+1}$. Let $L$ be a minimal representative of~$\cL$, and
consider the connected components of $L\cap\Delta_i$. By minimality,
each such component is of one of four types:
\begin{itemize}
\item A {\em right loop} component, which has both endpoints on the
  arc $\beta_i$ and intersects both of the arcs $\alpha_{2i-1}$ and
  $\alpha_{2i}$;
\item A {\em left loop} component, which has both endpoints on the arc
  $\beta_{i+1}$ and intersects both of the arcs $\alpha_{2i-1}$ and
  $\alpha_{2i}$;
\item An {\em above} component, which has one endpoint on each of the
  arcs $\beta_i$ and $\beta_{i+1}$, and intersects the arc
  $\alpha_{2i-1}$ but not the arc $\alpha_{2i}$; or
\item A {\em below} component, which has one endpoint on each of the
  arcs $\beta_i$ and $\beta_{i+1}$, and intersects the arc
  $\alpha_{2i}$ but not the arc $\alpha_{2i-1}$.
\end{itemize}
Clearly there cannot be both left loop and right loop components.
It follows immediately from~(\ref{eq:dynn-coords}) that there
are~$|b_i|$ loop components, which are left loops if $b_i<0$, and
right loops if $b_i>0$; and that $2a_i$ is the difference between the
number of below components and the number of above components.

Now suppose that $2\le i\le n-2$, and that $b_{i-1}<0$ and $b_i>0$, so
that $\Delta_{i-1}$ contains left loop components and $\Delta_i$
contains right loop components. Lemma~\ref{lem:elementary} below says
that these loop components join up to give simple closed curves
provided that $a_{i-1}=a_1$. 

\begin{defn}[Elementary curve]
An essential simple closed curve in $D_n$ is {\em elementary} (about
punctures $i$ and $i+1$) if it is isotopic to a simple closed curve
which is contained in $\Delta_{i-1}\cup\Delta_i$ for some $1\le i\le
n-1$. (Here $\Delta_0$ is the subset of
$D_n$ containing puncture~1  and bounded by
$\beta_1$; and $\Delta_{n-1}$ is the subset of $D_n$ containing
puncture~$n$ and bounded by $\beta_{n-1}$.)
\end{defn}

\begin{lem}
\label{lem:elementary}
Let~$L$ be a minimal representative of an integral lamination
$\cL\in\cL_n$ with Dynnikov coordinates $(a;\,b)$.  Let $2\le i\le
n-2$, and suppose that $b_{i-1}<0$, $b_i>0$, and $a_{i-1}=a_i$.

Then~$L$ has $\min(-b_{i-1},\,b_i)$ components which are elementary
curves about punctures $i$ and $i+1$.
\end{lem}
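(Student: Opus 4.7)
The plan is to work directly with the intersections $L\cap\beta_i$ and understand how the left loops of $\Delta_{i-1}$ and the right loops of $\Delta_i$ meet along $\beta_i$. Reading $\beta_i$ from top to bottom, its $\beta_i$ intersection points are partitioned in two ways: by the arc components of $L\cap\Delta_{i-1}$ (on the left) and by those of $L\cap\Delta_i$ (on the right). Each partition is non-crossing. From the left, the points are grouped into $A_{i-1}$ endpoints of above components (at the top), $-b_{i-1}$ upper endpoints of left loops, $-b_{i-1}$ lower endpoints of left loops, and $B_{i-1}$ endpoints of below components (at the bottom); here $A_{i-1}+B_{i-1}=\beta_i+2b_{i-1}$ and $B_{i-1}-A_{i-1}=2a_{i-1}$. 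The analogous decomposition holds from the right side, with $b_i$, $A_i$, $B_i$ in place of $-b_{i-1}$, $A_{i-1}$, $B_{i-1}$.

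Next, I would use the identities above together with the hypothesis $a_{i-1}=a_i$ to compute the offset between the two decompositions. A direct calculation gives $A_i-A_{i-1}=-b_{i-1}-b_i$, from which it follows that the block of $-b_{i-1}$ upper left-loop endpoints (occupying positions $A_{i-1}+1,\ldots,A_{i-1}-b_{i-1}$) and the block of $b_i$ upper right-loop endpoints (occupying positions $A_i+1,\ldots,A_i+b_i$) overlap in exactly $\min(-b_{i-1},b_i)$ positions, and symmetrically for the lower blocks.

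The key step is then to verify that the nested pairings glue these overlapping endpoints into closed curves. Both the left and right loop pairings are the outermost-to-innermost nested matchings of their respective blocks. Using the offset $A_i-A_{i-1}=-b_{i-1}-b_i$, a short indexing argument shows that whenever an upper endpoint is simultaneously the $k$-th upper left-loop endpoint and the $(k+b_i+b_{i-1})$-th upper right-loop endpoint, the left-loop partner and the right-loop partner coincide at the same point of $\beta_i$. Consequently, for each of the $\min(-b_{i-1},b_i)$ overlapping upper positions, the corresponding left loop (in $\Delta_{i-1}$) and right loop (in $\Delta_i$) concatenate into a single simple closed curve contained in $\Delta_{i-1}\cup\Delta_i$, hence an elementary curve about punctures $i$ and $i+1$.

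Finally, I would argue that no further elementary curves arise: any elementary curve about $i$ and $i+1$ is, up to isotopy, a concatenation of left loops in $\Delta_{i-1}$ and right loops in $\Delta_i$, and a left or right loop not lying in the overlap must have an endpoint on $\beta_i$ that is an endpoint of an above or below component on the opposite side, forcing the curve containing it to enter $\Delta_{i-2}$ or $\Delta_{i+1}$. The main obstacle I anticipate is justifying the ``standard form'' assumption in the opening paragraph, namely that in a minimal representative the above endpoints on $\beta_i$ actually lie above all loop endpoints (and similarly for below), rather than being interleaved with them; this is a topological fact about minimal position which must be carefully verified before the indexing argument can be carried out, and it is what ultimately ties the combinatorial formulas for $A_{i-1},B_{i-1},A_i,B_i$ to the geometry on $\beta_i$.
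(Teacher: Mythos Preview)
Your proposal is correct and follows essentially the same route as the paper's proof: both arguments read the endpoints of $L\cap\Delta_{i-1}$ and $L\cap\Delta_i$ along $\beta_i$ from top to bottom, compute the offset between the two decompositions (your identity $A_i-A_{i-1}=-b_{i-1}-b_i$ is exactly what the paper obtains after its WLOG reductions $b_i\ge -b_{i-1}$ and $a_{i-1}=a_i\ge 0$), and then check by direct indexing that the nested left-loop and right-loop pairings share both endpoints. The ordering fact you flag as the main obstacle---that above endpoints lie above loop endpoints, which lie above below endpoints---is precisely the standard description of a minimal representative relative to the Dynnikov arcs that the paper invokes without further comment in its preliminary discussion of the four component types.
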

\begin{proof}
We suppose without loss of generality that $b_i \ge -b_{i-1}$ (otherwise
reflect in the arc $\beta_i$) and that $a_{i-1}=a_i\ge 0$
(otherwise reflect in the horizontal diameter of the disk).

Let there be~$A\ge 0$ above components of $L\cap\Delta_i$. Then
the endpoints of components of $L\cap\Delta_i$ on $\beta_i$,
ordered from top to bottom, come: $A$ from above components;
$2b_i$ from $b_i$ right loop components; and
$A+2a_i$ from below components.

Since these endpoints must agree with those of components of
$L\cap\Delta_{i-1}$ on $\beta_i$, there are also $2(A+a_i+b_i) =
2(A+a_{i-1}+b_i)$ of these, which, again from top to bottom, come:
$A+b_i+b_{i-1}$ from above components; $-2b_{i-1}$ from $-b_{i-1}$
left loop components; and $A+b_i+b_{i-1}+2a_i$ from below components.

Therefore the $j^\text{th}$ outermost right loop component in
$\Delta_i$ joins the endpoint in position $A+j$ to that in
position $A+2b_i+1-j$ ($1\le j\le b_i$); and the $k^\text{th}$
outermost left loop component in $\Delta_{i-1}$ joins the endpoint in
position $A+b_i+b_{i-1}+k$ to that in position $A+b_i-b_{i-1}+1-k$
($1\le k\le -b_{i-1}$). It follows that the $k^\text{th}$ outermost left loop
component shares its endpoints with the $(b_i+b_{i-1}+k)^\text{th}$
outermost right loop component, yielding $-b_{i-1}$ elementary curve
components as required.
\end{proof}

\subsection{The action of the braid group}
The mapping class group $\MCG(D_n)$ of $D_n$ is isomorphic to the
$n$-braid group~$B_n$ modulo its center~\cite{emil2}, so that
elements of $\MCG(D_n)$ can be represented in terms of the Artin braid
generators $\sigma_i$ ($1\le i\le n-1$). In this paper we adopt the
convention of Birman's book~\cite{birman}, that $\sigma_i$ exchanges
punctures~$i$ and~$i+1$ in the {\em counterclockwise} direction.

The action of $\MCG(D_n)$ on $\cL_n$ can be calculated using the {\em
  update rules} of Theorem~\ref{thm:update} below (see for
example~\cite{D02, M06, or08, paper1, paper2}), which describe how
Dynnikov coordinates transform under the action of the Artin
generators and their inverses. In this theorem statement we again use
the notation $x^+$ to denote $\max(x,0)$.

\begin{thm}[Update rules]
\label{thm:update}
Let $\cL\in\cL_n$ have Dynnikov coordinates $(a;\,b)$, and let $1\le
i\le n-1$.
\begin{enumerate}[(a)]
\item Let $(a';\,b')$ be the Dynnikov coordinates of the integral
  lamination $\sigma_i(\cL)$. Then $a_j' = a_j$ and $b_j' = b_j$ for
  all $j\not\in\{i-1,i\}$, and

\noindent $\bullet$ \ If $i=1$ then
\begin{equation}
\label{eq:pos-update-left}
\begin{aligned}
a_1' &= b_1 - \max(0,\,\,b_1^+-a_1),\\
b_1' &= b_1^+ - a_1.
\end{aligned}
\end{equation}

\noindent $\bullet$ \ If $2\le i\le n-2$ then
\begin{equation}
\label{eq:pos-update}
\begin{aligned}
a_{i-1}' &= \max(a_{i-1} + b_{i-1}^+,\,\, a_i + b_{i-1}),\\
a_i' &= b_i - \max(-a_{i-1},\,\,b_i^+ - a_i),\\
b_{i-1}' &= a_i + b_{i-1} + b_i - \max(a_{i-1} + b_{i-1}^+ + b_i^+,\,\,
a_i + b_{i-1}),\\
b_i' &= \max(a_{i-1} + b_{i-1}^+ + b_i^+,\,\, a_i + b_{i-1}) - a_i.
\end{aligned}
\end{equation}

\noindent $\bullet$ \ If $i=n-1$ then
\begin{equation}
\label{eq:pos-update-right}
\begin{aligned}
a_{n-2}' &= \max(a_{n-2} + b_{n-2}^+,\,\, b_{n-2}),\\
b_{n-2}' &= b_{n-2} - a_{n-2} - b_{n-2}^+.
\end{aligned}
\end{equation}

\item Let $(a'';\,b'')$ be the Dynnikov coordinates of the integral
  lamination $\sigma_i^{-1}(\cL)$. Then $a_j'' = a_j$ and $b_j'' = b_j$ for
  all $j\not\in\{i-1,i\}$, and

\noindent $\bullet$ \ If $i=1$ then
\begin{equation}
\label{eq:neg-update-left}
\begin{aligned}
a_1'' &= \max(0,\,\,a_1+b_1^+) - b_1,\\
b_1'' &= a_1 + b_1^+.
\end{aligned}
\end{equation}

\noindent $\bullet$ \ If $2\le i\le n-2$ then
\begin{equation}
\label{eq:neg-update}
\begin{aligned}
a_{i-1}'' &= \min(a_{i-1}-b_{i-1}^+,\,\,a_i-b_{i-1}),\\
a_i'' &= \max(a_{i-1},\,\, a_i + b_i^+) - b_i,\\
b_{i-1}'' &= -a_i + b_{i-1} + b_i - \max(-a_{i-1} + b_{i-1}^+ +
b_i^+,\,\, -a_i + b_{i-1}),\\
b_i'' &= a_i + \max(b_{i-1}^+ + b_i^+ - a_{i-1},\,\, b_{i-1}-a_i).
\end{aligned}
\end{equation}

\noindent $\bullet$ \ If $i=n-1$ then
\begin{equation}
\label{eq:neg-update-right}
\begin{aligned}
a_{n-2}'' &= \min(a_{n-2} - b_{n-2}^+  ,\,\, -b_{n-2}  ),\\
b_{n-2}'' &= a_{n-2} + b_{n-2} - b_{n-2}^+.
\end{aligned}
\end{equation}
\end{enumerate}
\end{thm}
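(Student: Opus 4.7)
The plan is to establish the update formulas geometrically, by tracking how a minimal representative of $\cL$ and its intersections with the Dynnikov arcs transform under a half-twist. Since $\sigma_i$ is supported in a neighborhood of an arc joining punctures $i$ and $i+1$, the only intersection numbers $\alpha_j, \beta_j$ that can change lie in the local region between $\beta_{i-1}$ and $\beta_{i+1}$; equivalently, for $2\le i\le n-2$, only $a_{i-1}, a_i, b_{i-1}, b_i$ can be altered. This observation immediately justifies the claim that $a_j'=a_j$ and $b_j'=b_j$ outside $\{i-1,i\}$, and reduces the theorem to a local computation.

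The core step is to describe $\sigma_i(\cL)$ by producing a new minimal representative $L'$. I would take a minimal representative $L$ of $\cL$ and decompose $L\cap\Delta_{i-1}$ and $L\cap\Delta_i$ into their above, below, left loop and right loop components as classified in the paragraph preceding Lemma~\ref{lem:elementary}. The multiplicities of these components are determined by $a_{i-1}, a_i, b_{i-1}, b_i$ together with the global quantity $\beta_i$ given by~(\ref{eq:beta}). Applying the half-twist $\sigma_i$ permutes the endpoints of arcs on $\beta_i$ in a controlled way, and after pulling tight (to restore minimality) one can read off the new counts of each of the four component types in $\Delta_{i-1}\cup\Delta_i$, hence the new intersection numbers $\alpha_{2i-3}', \alpha_{2i-2}', \alpha_{2i-1}', \alpha_{2i}', \beta_{i-1}', \beta_i', \beta_{i+1}'$. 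Converting back via~(\ref{eq:dynn-coords}) gives $(a';b')$.

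To verify that the resulting expressions agree with~(\ref{eq:pos-update}), I would carry out the case analysis in the max-plus (tropical) semiring. The signs of $a_{i-1}, a_i, b_{i-1}, b_i$ determine which component types are present (for instance $b_i>0$ forces right loops in $\Delta_i$, while $b_i<0$ forces left loops), and in each sign configuration the $\max$ expressions in~(\ref{eq:pos-update}) collapse to a single linear branch, which can then be matched against the explicit geometric count. A useful shortcut is to first check the identities at the level of $\alpha_j', \beta_j'$, where they take a simpler piecewise-linear form, and only afterwards take the combinations~(\ref{eq:dynn-coords}) that define $(a';b')$. The boundary cases $i=1$ and $i=n-1$ admit the same strategy with fewer component types to track, since $\Delta_0$ and $\Delta_{n-1}$ contain only a single puncture and are bounded by a single $\beta$-arc; this yields the reduced formulas~(\ref{eq:pos-update-left}) and~(\ref{eq:pos-update-right}).

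Part (b) can be handled either by repeating the same geometric argument with the clockwise half-twist, or, more economically, by verifying that~(\ref{eq:neg-update-left})--(\ref{eq:neg-update-right}) are the left inverses of~(\ref{eq:pos-update-left})--(\ref{eq:pos-update-right}) as piecewise-linear maps of $\Z^{2n-4}$; since the forward rules are bijections onto their image, this suffices. The main obstacle is the bookkeeping: the case analysis branches according to the signs of four integers and to which of several max-terms attains the maximum, giving a sizeable number of subcases, and care is required along the "walls" where a coordinate vanishes to ensure that the above/below and left/right loop descriptions glue consistently. No genuinely new idea is needed beyond the classification of components already used in the proof of Lemma~\ref{lem:elementary} and the standard tropical manipulations, but the verification is unavoidably lengthy, which is why the theorem is usually quoted from the references~\cite{D02, M06, or08, paper1, paper2} rather than reproved in full.
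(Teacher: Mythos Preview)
The paper does not prove Theorem~\ref{thm:update}: it is stated with a reference to the literature (\cite{D02, M06, or08, paper1, paper2}) and used as a black box. So there is no ``paper's own proof'' to compare against.

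Your outline is a reasonable sketch of the standard derivation found in those references: one localises to $\Delta_{i-1}\cup\Delta_i$, decomposes a minimal representative into above/below/loop strands, applies the half-twist, tightens, and reads off the new intersection numbers, with the resulting piecewise-linear formulas packaged tropically. Your remark that part~(b) can alternatively be checked by showing that the maps in~(\ref{eq:neg-update-left})--(\ref{eq:neg-update-right}) invert those in~(\ref{eq:pos-update-left})--(\ref{eq:pos-update-right}) is also standard. Your closing sentence already anticipates the situation here: the paper simply quotes the result rather than reproducing the lengthy case analysis.
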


\section{The algorithm}
\label{sec:algorithm}
Given $(a;\,b)\in\Z^{2n-4}\setminus\{0\}$, we write $X(a;\,b):=X(\cL)$, where
$\cL = \rho^{-1}(a;\,b)\in\cL_n$. Algorithm~\ref{alg:algorithm} below
computes $X(a;\,b)$ from $(a;\,b)$.

\subsection{The case $n=3$}
The following result is an expression of the well-known fact that the
braid group $B_3$ acts on $\cL_3$ by Euclid's algorithm.
\begin{lem}
\label{lem:3-punctures}
Let $(a_1;\,b_1)\in\Z^2\setminus\{0\}$. Then $X(a_1;\,b_1) = \gcd(a_1,\,b_1)$.
\end{lem}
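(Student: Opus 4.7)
The plan is to leverage two facts: first, that $X$ is constant on $B_3$-orbits in $\cL_3$, since each element of $\MCG(D_3)$ is represented by a homeomorphism of $D_3$ that carries a minimal representative of $\cL$ to a minimal representative of $h(\cL)$ with the same number of components; and second, that the update rules of Theorem~\ref{thm:update} with $n=3$ preserve $\gcd(a_1,b_1)$ and allow any pair to be reduced to a canonical form via a Euclidean division process. Together these reduce the lemma to a single base-case computation combined with a reduction argument.

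For the base case, I would substitute $(a_1,b_1)=(d,0)$ with $d\ge 1$ into the inversion formulas~\eqref{eq:beta} and~\eqref{eq:alpha} to obtain $\alpha_1=0$, $\alpha_2=2d$, and $\beta_1=\beta_2=2d$. Since $b_1=0$ there are no loop components in $\Delta_1$, and since $\alpha_1=0$ there are no ``above'' components, so $L\cap\Delta_1$ consists of exactly $2d$ ``below'' arcs from $\beta_1$ to $\beta_2$. The $2d$ endpoints on $\beta_1$ are then paired up by $d$ arcs in $\Delta_0$ around puncture~$1$, and similarly by $d$ arcs around puncture~$3$ in $\Delta_2$, forcing $L$ to be $d$ parallel copies of a single essential simple closed curve enclosing punctures~$1$ and~$3$. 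Hence $X(d,0)=d$; analogous computations for $(-d,0)$, $(0,d)$, and $(0,-d)$ likewise give $d$.

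Finally, I would run the braid action as Euclid's algorithm on $(a_1,b_1)$. The key step is to verify that for each sign pattern of a pair with both coordinates nonzero, some choice of generator $\sigma_1^{\pm 1}$ or $\sigma_2^{\pm 1}$ — via the specialisations of equations~\eqref{eq:pos-update-left}, \eqref{eq:pos-update-right}, \eqref{eq:neg-update-left} and~\eqref{eq:neg-update-right} to $n=3$ — replaces $(a_1,b_1)$ by $(a_1',b_1')$ with $\gcd(a_1',b_1')=\gcd(a_1,b_1)$ and with $|a_1'|+|b_1'|$ strictly smaller. Iterating terminates at a pair of the form $(\pm d,0)$ or $(0,\pm d)$ with $d=\gcd(|a_1|,|b_1|)$, and combining with the braid-invariance of $X$ and the base case yields $X(a_1,b_1)=d$. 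The main obstacle is this sign case analysis: there are four sign patterns to handle, each needing the correct choice of generator to produce a Euclidean division step, but each case is a short substitution into the update formulas of Theorem~\ref{thm:update}.
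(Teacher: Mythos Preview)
Your proposal is correct and shares its overall shape with the paper's proof --- both use that $X$ is $B_3$-invariant and that the update rules preserve $\gcd(a_1,b_1)$ --- but the reductions to a canonical form differ. The paper argues topologically: any essential simple closed curve in $D_3$ bounds a disk containing exactly two punctures, so some braid $\sigma$ carries a chosen component of $L$ (and hence, by disjointness, all of $L$) to a stack of elementary curves about punctures~$1$ and~$2$; the image therefore has Dynnikov coordinates $(0;\,X(a_1,b_1))$, and gcd-invariance gives $X(a_1,b_1)=\gcd(0,X(a_1,b_1))=\gcd(a_1,b_1)$ in one line, with no base-case computation or sign analysis needed. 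Your route instead runs the Euclidean algorithm explicitly through the update rules, which is more hands-on: it buys you a constructive algorithm (you actually produce the simplifying braid word) and avoids invoking the topological classification of curves on $D_3$, at the cost of the four-case sign analysis and the explicit inversion for the base case $(d,0)$. Both are fine; the paper's version is shorter, yours is more self-contained.
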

\begin{proof}
It can easily be seen from Theorem~\ref{thm:update} that the action of
the Artin generators of~$B_3$ on $\cL_3$, when expressed in Dynnikov
coordinates, preserves $\gcd(a_1,\,b_1)$. 

Let~$L$ be a minimal representative of~$\cL$, and let~$C$ be any
component of~$L$. Since~$C$ is essential, it bounds a disk
containing~2 of the~3 punctures, and hence there is a braid $\sigma\in
B_3$ such that $\sigma(C)$ is an elementary curve about punctures $1$
and $2$. Since the components of $\sigma(L)$ are disjoint, it consists
of $X(a_1;\,b_1)$ elementary curves about these punctures, and hence
$\rho(\sigma(L)) = (0;\, X(a_1;\,b_1))$. Therefore $X(a_1;\,b_1) =
\gcd(0,\,X(a_1;\,b_1)) = \gcd(a_1,\,b_1)$ as required.
\end{proof}

\subsection{Extended Dynnikov coordinates}
The first step of Algorithm~\ref{alg:algorithm} is to add two
``dummy'' punctures, one to the left and one to the right of the
existing punctures.

The motivation for this is that one of the moves of the algorithm
fills in a puncture (see Section~\ref{sec:filling}). If this were done
without the dummy punctures, it could result in boundary-parallel
components, and therefore take us out of the realm of integral
laminations. While it would be possible to calculate, and compensate
for, the number of such boundary-parallel components, this would
involve an inversion of Dynnikov coordinates using~(\ref{eq:beta})
and~(\ref{eq:alpha}) each time that a puncture is filled in, and would
therefore decrease the efficiency of the algorithm.

An additional benefit of the dummy punctures is to simplify the
statement of the algorithm. Once the dummy punctures have been added,
the braid group~$B_n$ acts on the central~$n$ punctures of an
$(n+2)$-punctured disk, so that the update rules are always given
by~(\ref{eq:pos-update}) and~(\ref{eq:neg-update}), avoiding the need
for separate end
cases~\eqref{eq:pos-update-left},~\eqref{eq:pos-update-right},~\eqref{eq:neg-update-left},
and~\eqref{eq:neg-update-right}.

Introducing the dummy punctures involves the extension of
Figure~\ref{fig:dynn-arcs} to include additional punctures labelled
$0$ and $n+1$, and additional arcs $\beta_0$ (between punctures $0$
and $1$); $\alpha_{-1}$ and $\alpha_0$ (each with an endpoint on
puncture~$1$); $\beta_n$ (between punctures $n$ and $n+1$); and
$\alpha_{2n-3}$ and $\alpha_{2n-2}$ (each with an endpoint on
puncture~$n$). Additional coordinates $a_0$, $b_0$, $a_{n-1}$ and
$b_{n-1}$ can then be defined using~(\ref{eq:dynn-coords}).

To describe an integral lamination $\cL\in\cL_n$ in these
extended coordinates, observe that we have $\alpha_{-1}=\alpha_0$,
$\alpha_{2n-3}=\alpha_{2n-2}$, and $\beta_0=\beta_n=0$, so that
\begin{equation}
\label{eq:extend}
\begin{aligned}
a_0 &= a_{n-1} = 0,\\
b_0 &= - \max_{1\le k\le n-2}
\left(
|a_k| + b_k^+ + \sum_{j=1}^{k-1} b_j
\right), \quad\text{and}\\
b_{n-1} &= -b_0 - \sum_{j=1}^{n-2} b_j
,
\end{aligned}
\end{equation}
using $b_0 = (\beta_0-\beta_1)/2 = -\beta_1/2$, $b_{n-1} =
(\beta_{n-1}-\beta_n)/2 = \beta_{n-1}/2$, and~(\ref{eq:beta}).

As mentioned above, we will always consider the action of the braid
group $B_n$ on integral laminations on this $(n+2)$-punctured disk,
using~(\ref{eq:pos-update}) and~(\ref{eq:neg-update}) for $1\le i\le n-1$. 

\begin{defn}[Central lamination]
An integral lamination~$\cL$ on the $(n+2)$-punctured disk is said to
be {\em central} if it satisfies $\beta_0=\beta_n=0$.
\end{defn}

 We will see that the algorithm moves all preserve the property of
 centrality, so that we will have
\[
a_0 = a_{n-1} = 0, \quad b_0\le 0,\quad \text{and}\quad b_{n-1}\ge 0
\]
throughout.

\subsection{The complexity function}
Given Dynnikov coordinates $(a;\,b) =
(a_0,\ldots,a_{n-1};\,b_0,\ldots,b_{n-1}) \in \Z^{2n}\setminus\{0\}$
of a central lamination,
we write $n(a;\,b) = n$, and define $i(a;\,b)$ by
\begin{enumerate}[a)]
\item $i(a;\,b) = 0$ if $b_i = 0$ for any~$i$, and otherwise
\item $i(a;\,b)$ is the smallest~$i\in\{1,\ldots,n-1\}$ with $b_i>0$.
\end{enumerate}
Note that such a smallest~$i$ must exist in case~b), since
$b_{n-1}>0$; and that $b_{i-1}<0$ since $b_0<0$.

\medskip

Progress through the algorithm is measured by decrease, in the
lexicographic order, of the {\em complexity}
\[
\cC(a;\,b) = \left(n(a;\,b),\,\, \sum_{i=0}^{n(a;\,b)-1}|b_i|,\,\,\,i(a;\,b)\right)\in\N^3.
\]

\subsection{The moves}
In this section we describe and analyse each of the three moves of the
algorithm: Filling in a puncture; Erasing elementary components; and
Untwisting. While the interpretation of each of these moves is
explained briefly at the start, and is clarified in the proofs of the
relevant lemmas, the formal descriptions of the moves and the
statements of their properties are given entirely in terms of Dynnikov
coordinates, with the intention of making it easier for a reader to
implement them.

Examples of the application of each of these moves can be found in the
extended example of Section~\ref{sec:example}.
\subsubsection{Filling in a puncture}
\label{sec:filling}
This move is applied when some $b_i=0$, so that a minimal
representative of the lamination has no loops about
puncture~$i+1$. The minimal representative therefore remains minimal
when the puncture is filled in.

\begin{lem}
\label{lem:fill-puncture}
Let $(a;\,b)\in\Z^{2n}$ be the Dynnikov coordinates of a central
integral lamination~$\cL$, with $n>3$. 

Suppose that $b_i=0$ for some~$i$. Let $(a';\,b')\in\Z^{2n-2}$ be
obtained from $(a;\,b)$ by erasing the coordinates $a_i$ and $b_i$ (so
that $a'_j = a_j$ and $b'_j = b_j$ for $j<i$, while $a'_j = a_{j+1}$
and $b'_j=b_{j+1}$ for $j\ge i$).

Then $(a';\,b')$ are the Dynnikov coordinates of a central integral
lamination with $X(a';\,b') = X(a;\,b)$ and $\cC(a';\,b') < \cC(a;\,b)$. 
\end{lem}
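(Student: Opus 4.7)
The plan is to interpret $b_i = 0$ topologically as the absence of loops in the strip $\Delta_i$, then fill in the puncture contained in $\Delta_i$ and verify coordinate-by-coordinate that the erasure of $a_i, b_i$ describes the resulting central lamination.

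Take a minimal representative $L$ of $\cL$. By the discussion preceding Lemma~\ref{lem:elementary}, $|b_i|$ is the number of loop components of $L \cap \Delta_i$, so the hypothesis $b_i = 0$ forces every component of $L \cap \Delta_i$ to be an above or below arc joining $\beta_i$ to $\beta_{i+1}$. Consequently no component of $L$ can encircle the puncture $p$ contained in $\Delta_i$, since any such encirclement would produce a loop component of $L \cap \Delta_i$ around $p$. Let $D'$ be obtained from the ambient $(n+2)$-punctured disk by filling in $p$, and regard $L$ as a disjoint union of simple closed curves in $D'$. No such curve becomes inessential: the only new failure modes would be a component bounding a once-punctured disk whose puncture is $p$, or bounding a disk in $D'$ containing all remaining punctures; each forces $p$ to be encircled from one side or the other, which was just excluded. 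Write $\cL'$ for the resulting integral lamination in $D'$.

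I would then verify that $\rho(\cL') = (a';b')$. Choose Dynnikov arcs $\alpha'_j, \beta'_j$ for $D'$: for $j < i$ take them equal to $\alpha_j, \beta_j$, giving $a'_j = a_j, b'_j = b_j$ at once; for $j > i$ take $\alpha'_{2j-1}, \alpha'_{2j}, \beta'_j$ to be isotopic copies of $\alpha_{2j+1}, \alpha_{2j+2}, \beta_{j+1}$, giving $a'_j = a_{j+1}, b'_j = b_{j+1}$. The subtle point is at the join: the new arc $\beta'_i$ in $D'$ can be isotoped onto old $\beta_{i+1}$ (the former region $\Delta_i$ is now an unpunctured disk, so $\beta_i$ and $\beta_{i+1}$ become isotopic in $D'$), and $L$ meets $\beta_i$ and $\beta_{i+1}$ in the same number of points precisely because $L \cap \Delta_i$ consists only of above/below arcs; a parallel argument handles the $\alpha$-arcs. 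Centrality $\beta'_0 = \beta'_{n-1} = 0$ in $D'$ then follows either because the outermost $\beta$-arcs are untouched by the isotopy (when $0 < i < n-1$), or because the replaced outermost $\beta$-arc has zero intersection with $L$ by the hypothesis $b_i = 0$ (when $i \in \{0, n-1\}$). The equality $X(a';b') = X(a;b)$ is then immediate, since $\cL$ and $\cL'$ are identical as collections of unoriented simple closed curves; and $\cC(a';b') < \cC(a;b)$ is immediate from $n(a';b') = n-1 < n = n(a;b)$. I expect the main obstacle to be keeping careful track of the arc labelling in the boundary cases $i \in \{0, n-1\}$, where a coordinate adjacent to a dummy puncture is being erased.
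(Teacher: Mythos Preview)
Your overall strategy is the same as the paper's: fill in puncture~$i+1$, argue that $L$ still represents an integral lamination, and read off the new coordinates from the old intersection numbers. However, there is a genuine gap.

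You compute the intersection numbers of $L$ with the new Dynnikov arcs in $D'$ and declare these to be $\rho(\cL')$. But Dynnikov coordinates are defined using a \emph{minimal} representative, and you never check that $L$ is minimal in $D'$. This is not automatic: a bigon between $L$ and a Dynnikov arc $\gamma'$ in $D'$ need not have been a bigon in $D_{n+2}$, precisely because the disk it bounds may contain the filled-in puncture $p$. The paper makes this the heart of the argument (its part~(b)): if $J$ is an arc of $L\setminus\gamma'$ bounding an unpunctured disk in $D'$ with a subarc of $\gamma'$, then that disk contained $p$ in $D_{n+2}$; minimality of $L$ in $D_{n+2}$ then forces $J$ not to cross $\beta_i$ (or $\beta_{i+1}$, depending on which side $\gamma'$ lies), so $J$ contributes a loop component in $\Delta_i$, contradicting $b_i=0$. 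Your ``subtle point at the join'' is about matching intersection counts across the erased strip, which is a separate (and easier) issue; it does not address minimality.

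Your essentiality argument is also imprecise. The sentence ``no component of $L$ can encircle the puncture $p$ \ldots\ since any such encirclement would produce a loop component'' is false as stated: a large curve surrounding punctures $i$, $i{+}1$, $i{+}2$ certainly ``encircles'' $p$ yet meets $\Delta_i$ only in above/below arcs. What you actually need (and what the paper proves) is narrower: a component that becomes inessential in $D'$ must have bounded, in $D_{n+2}$, a disk containing $p$ and exactly one other puncture $q$; whether $q$ lies left or right of $p$ forces a right or left loop in $\Delta_i$. Your listed ``failure mode'' of a component bounding a disk in $D'$ containing all remaining punctures cannot in fact arise from an essential curve in $D_{n+2}$ (its complementary disk would have contained only $p$), so that case is vacuous rather than handled by your encirclement claim.
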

\begin{proof}
Let~$L\subset D_{n+2}$ be a minimal representative of~$\cL$. We
write~$L'$ for the same disjoint union of simple closed curves,
regarded as a subset of the $(n+1)$-punctured disk $D_{n+1}$ obtained by
filling in puncture~$i+1$. Take Dynnikov arcs in $D_{n+1}$ given by
$\alpha_j' = \alpha_j$ for $-1\le j\le 2i-2$; $\alpha_j' =
\alpha_{j+2}$ for $2i-1\le j\le 2n-4$; $\beta_j'=\beta_j$ for $0\le
j\le i$; and $\beta_j'=\beta_{j+1}$ for $i+1\le j\le n-1$.  We shall
show that
\begin{enumerate}[a)]
\item Each component of~$L'$ is essential in $D_{n+1}$, so that $L'$ is a
  representative of an integral lamination~$\cL'$;
\item $L'$ is a minimal representative of $\cL'$; and
\item $\cL'$ is central.
\end{enumerate}

It follows from~b) that the Dynnikov coordinates $(a';\,b')$ of~$\cL'$
are as given in the statement of the lemma. Since $X(a;\,b)$ and
$X(a';\,b')$ are both equal to the number of components of~$L$, and
$n(a';\,b')=n(a;\,b)-1$, this will establish the result.

\begin{enumerate}[a)]
\item Since~$\cL$ is central, $L'$ does not intersect the arcs
  $\beta'_0=\beta_0$ or $\beta'_{n-1}=\beta_n$, and hence has no
  components which bound an $(n+1)$-punctured disk. Since every
  component of~$L$ bounds a disk containing at least two punctures, no
  component of~$L'$ can bound an unpunctured disk; and if there were a
  component bounding a once-punctured disk, it would coincide with a
  component of~$L$ bounding a disk containing puncture~$i+1$ and one
  other puncture. If this puncture were to the right (respectively
  left) of puncture~$i+1$, then $L$ would have a left (respectively
  right) loop component in $\Delta_i$, and we would have $b_i<0$
  (respectively $b_i>0$). Since $b_i=0$, this is impossible.

\item Suppose for a contradiction that $L'$ is not a minimal
  representative of $\cL'$. Then there is a Dynnikov arc~$\gamma'$ in
  $D_{n+1}$ and a component~$J$ of $L'\setminus\gamma'$ which is an
  arc whose union with the segment of $\gamma'$ bounded by its
  endpoints forms a simple closed curve~$C$ bounding an unpunctured
  disk~$D$. Since~$L$ is a minimal representative of~$\cL$ and
  $\gamma'$ is a Dynnikov arc in $D_{n+2}$, the disk~$D$ must
  contain the filled in puncture~$i+1$. Again, since $L$ is minimal,
  if $\gamma'$ is to the right (respectively left) of puncture~$i+1$,
  then~$J$ does not intersect the Dynnikov arc $\beta_i$ (respectively
  $\beta_{i+1}$); hence $L$ has a left (respectively right) loop
  component in $\Delta_i$, so that $b_i<0$ (respectively
  $b_i>0$). This is the required contradiction.

\item We have $\beta_0' = \beta_0 =0$ and $\beta_{n-1}' =
  \beta_n=0$ (or, if $i=n-1$, then $\beta_{n-1}' = \beta_{n-1} = 0$,
  since $b_{n-1}=0$ and $\beta_n=0$), so $\cL'$ is central.
\end{enumerate}

\end{proof}

\subsubsection{Erasing elementary components}
This move is applied when a minimal representative of the lamination
contains components which are elementary curves. We erase these
elementary components, thereby simplifying the lamination.

\begin{lem}
\label{lem:erase-simple}
Let $(a;\,b)\in\Z^{2n}$ be the Dynnikov coordinates of a central
integral lamination~$\cL$ with $n>3$.

Suppose that $b_j\not=0$ for all~$j$, so that $i=i(a;\,b)>0$ and $M =
\min(-b_{i-1},\,b_i)>0$; and that $a_{i-1}=a_i$. Let
$(a';\,b')\in\Z^{2n}$ be defined by $a'=a$; $b'_{i-1} = b_{i-1}+M$;
$b'_i = b_i-M$; and $b'_j = b_j$ for all~$j\not=i-1,\,i$.

Then $(a';\,b')$ are the Dynnikov coordinates of a central integral
lamination with $X(a';\,b') = X(a;\,b) - M$ and $\cC(a';\,b') <
\cC(a;\,b)$. 
\end{lem}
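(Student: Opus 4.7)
The plan is to invoke Lemma~\ref{lem:elementary}: let $L$ be a minimal representative of $\cL$, and let $L'$ be obtained by deleting from $L$ the $M$ elementary curves about punctures $i$ and $i+1$ that the lemma provides (the hypotheses $b_{i-1}<0$, $b_i>0$, $a_{i-1}=a_i$ hold by construction). The goal is to show that $L'$ is a minimal representative of a central integral lamination $\cL'$ whose Dynnikov coordinates are the stated $(a';\,b')$; the claims $X(a';b') = X(a;b) - M$ and $\cC(a';b') < \cC(a;b)$ will then follow quickly.

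The central calculation is the effect of the removal on the intersection numbers with the Dynnikov arcs. From the construction in the proof of Lemma~\ref{lem:elementary}, each elementary curve about punctures $i$ and $i+1$ is formed by a left loop in $\Delta_{i-1}$ glued to a right loop in $\Delta_i$ along shared endpoints on $\beta_i$. Such a curve therefore meets $\beta_i$ exactly twice, meets each of $\alpha_{2i-3},\alpha_{2i-2},\alpha_{2i-1},\alpha_{2i}$ exactly once, and is disjoint from every other Dynnikov arc. Deleting $M$ of them decreases the $\beta_i$-count by $2M$ and each of these four $\alpha$-counts by $M$, leaving all remaining counts unchanged; substituting into~(\ref{eq:dynn-coords}) then yields exactly $a'=a$, $b'_{i-1}=b_{i-1}+M$, $b'_i=b_i-M$, and $b'_j=b_j$ for $j\notin\{i-1,i\}$.

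It remains to verify four items. First, each component of $L'$ is essential, which is inherited from $L$. Second, $\cL'$ is central, since the removed curves lie in $\Delta_{i-1}\cup\Delta_i$ and so do not meet $\beta_0$ or $\beta_n$. Third, $L'$ is a minimal representative: this follows from the standard fact that a subcollection of a disjoint multicurve in minimal position with respect to a family of arcs remains in minimal position, since an outermost-arc argument turns any bigon between $L'$ and a Dynnikov arc into a bigon between $L$ and that arc, contradicting minimality of $L$. Fourth, $L'$ is non-empty: if $L$ consisted only of elementary curves about $i$ and $i+1$, then $\beta_j$ would vanish for every $j\neq i$, which forces $b_0=0$ when $i\ge 2$ and $b_2=0$ when $i=1$ (this is where $n>3$ is used), contradicting $b_j\neq 0$ for all~$j$. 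The third item is the main obstacle; the rest is bookkeeping. The closing claims are then immediate: $X(a';b')=X(a;b)-M$ because $L'$ has $M$ fewer components than~$L$, and $\cC$ strictly decreases since $n(a';b')$ is unchanged while $|b'_{i-1}|+|b'_i| = |b_{i-1}|+|b_i|-2M$ (using $b_{i-1}<0$, $b_i>0$, and $M\le\min(-b_{i-1},b_i)$), so the second coordinate of $\cC$ drops by $2M>0$.
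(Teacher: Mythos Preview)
Your proof is correct and follows essentially the same approach as the paper: invoke Lemma~\ref{lem:elementary} to find the $M$ elementary components, delete them, compute the effect on intersection numbers, and read off the new Dynnikov coordinates and the drop in complexity. You supply more detail than the paper on why $L'$ remains minimal (your bigon/outermost-arc remark) and why it is non-empty, but these are elaborations of points the paper handles in a sentence rather than a different route.
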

\begin{proof}
Let $L$ be a minimal representative of~$\cL$. By
Lemma~\ref{lem:elementary}, $L$ has $M$ components which are
elementary curves about punctures $i$ and $i+1$. Moreover, $L$ has
other components besides these, since all of the $b_j$ are non-zero
(an integral lamination consisting entirely of elementary curves about
these punctures would have $b_j=0$ for all $j\not=i-1,\,i$). Therefore
the union of simple closed curves~$L'$ obtained from~$L$ by erasing
these elementary curves is a minimal representative of a central
integral lamination~$\cL'$.

Erasing the elementary curves reduces the number of intersections with
the arcs $\alpha_{2i-3}$, $\alpha_{2i-2}$, $\alpha_{2i-1}$, and
$\alpha_{2i}$ by~$M$; and the number of intersections with the arc
$\beta_i$ by $2M$. Therefore~$\cL'$ has Dynnikov coordinates
$(a';\,b')$ as given in the statement of the lemma.

Clearly $X(a';\,b') = X(a;\,b)-M$; and $n(a';\,b')=n(a;\,b)$, while
$\sum_{i=0}^{n-1}|b_i'| = \sum_{i=0}^{n-1}|b_i|-2M$, so that
$\cC(a',b') < \cC(a,b)$.
\end{proof}

\subsubsection{Untwisting}
This move is applied when there are two consecutive punctures, with a
left loop about the left puncture and a right loop about the right
puncture, but no elementary curves about these two
punctures. Applying an appropriate braid generator simplifies the
lamination.

\begin{lem}
\label{lem:untwist}
Let $(a;\,b)\in\Z^{2n}$ be the Dynnikov coordinates of a central
integral lamination~$\cL$ with $n>3$. 

Suppose that $b_j\not=0$ for all~$j$, so that $i=i(a;\,b)>0$; and that
$a_{i-1}\not=a_i$. Let $(a';\,b')\in\Z^{2n}$ be defined by $a_j' =
a_j$ and $b_j'=b_j$ for all $j\not=i-1,\,i$, and:
\begin{description}
\item[Case Ia): $0<a_{i-1}-a_i$ \quad and \quad $b_i-b_{i-1} \le a_{i-1}-a_i$.] 
\[
\begin{aligned}
a_{i-1}' &= a_i - b_{i-1},\\
a_i' &= a_{i-1}-b_i,\\
b_{i-1}' &= b_i,\\
b_i' &= b_{i-1}.
\end{aligned}
\]

\item[Case Ib): $0<a_{i-1}-a_i<b_i-b_{i-1}$.] 
\[
\begin{aligned}
a_{i-1}' &= \min(a_i-b_{i-1},\,\, a_{i-1}),\\
a_i' &= \max(a_{i-1}-b_i,\,\, a_i),\\
b_{i-1}' &= b_{i-1} + (a_{i-1}-a_i),\\
b_i' &= b_i - (a_{i-1}-a_i).
\end{aligned}
\]

\item[Case IIa): $0<a_i-a_{i-1}$ \quad and \quad $b_i-b_{i-1} \le a_i-a_{i-1}$.] 
\[
\begin{aligned}
a_{i-1}' &= a_i+b_{i-1},\\
a_i' &= a_{i-1}+b_i,\\
b_{i-1}' &= b_i,\\
b_i' &= b_{i-1}.
\end{aligned}
\]

\item[Case IIb): $0<a_i-a_{i-1} < b_i-b_{i-1}$.] 
\[
\begin{aligned}
a_{i-1}' &= \max(a_{i-1},\,\, a_i+b_{i-1}),\\
a_i' &= \min(a_i,\,\, a_{i-1}+b_i),\\
b_{i-1}' &= b_{i-1} + (a_i-a_{i-1}),\\
b_i' &= b_i - (a_i-a_{i-1}).
\end{aligned}
\]
\end{description}

\medskip

Then $(a';\,b')$ are the Dynnikov coordinates of a central integral
lamination with $X(a';\,b')=X(a;\,b)$ and $\cC(a';\,b')<\cC(a;\,b)$.
\end{lem}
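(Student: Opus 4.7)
The plan is to recognise each of the four cases as the action of a single Artin generator on $\cL$: specifically, Cases Ia) and Ib) as $\sigma_i^{-1}(\cL)$, and Cases IIa) and IIb) as $\sigma_i(\cL)$. Given this identification, $X(a';\,b')=X(a;\,b)$ and the preservation of centrality follow from basic topological properties of the braid action, and the explicit formulas for $(a';\,b')$ reduce to what the update rules of Theorem~\ref{thm:update} give once the case hypotheses have resolved the $\max$/$\min$ operations; the split between subcases a) and b) comes from which argument of a given maximum dominates.

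First I would verify the identification. Under the standing hypotheses $b_{i-1}<0<b_i$ one has $b_{i-1}^+=0$ and $b_i^+=b_i$, so each update formula in~(\ref{eq:pos-update}) or~(\ref{eq:neg-update}) simplifies. The pivotal inequality inside the brackets is the comparison of $a_{i-1}+b_i$ with $a_i+b_{i-1}$, which rearranges to the comparison of $b_i-b_{i-1}$ with $a_{i-1}-a_i$ --- precisely the dichotomy separating subcases a) from b). Selecting the correct branch in every $\max$ or $\min$ and substituting should then yield each of the four stated systems; this step is routine but needs to be carried out once per case.

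Given the identification, the topological claims follow easily. Braids are homeomorphisms of the $(n+2)$-punctured disk and so preserve the number of components, giving $X(a';\,b')=X(a;\,b)$. The generator $\sigma_i$ can be chosen with support in a disk containing only punctures $i$ and $i+1$, so it fixes the arcs $\beta_0$ and $\beta_n$ pointwise, forcing $\beta_0'=\beta_n'=0$ and hence $\cL'$ is central. At the boundary $i=1$ or $i=n-1$ one must also check that subcases Ia) and IIa) are made vacuous by the centrality of $(a;\,b)$: the bound $b_0\le a_1-b_1$ coming from~(\ref{eq:extend}) conflicts directly with the case hypothesis $b_0\ge a_1+b_1$ (and symmetrically at the right end).

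The main obstacle, and indeed the only real work in the proof, is the strict decrease of complexity. Since $n(a';\,b')=n(a;\,b)$, I must check that the pair $\bigl(\sum_j|b_j|,\ i(a;\,b)\bigr)$ drops lexicographically. In Cases Ia) and IIa) the transformation swaps $b_{i-1}$ with $b_i$, leaving $\sum_j|b_j|$ unchanged; but $b_{i-1}'=b_i>0$ forces $i(a';\,b')\le i-1 < i(a;\,b)$, giving the required decrease. In Cases Ib) and IIb), writing $d=|a_{i-1}-a_i|$ with the case hypothesis $0<d<b_i-b_{i-1}$, the new coordinates satisfy $b_{i-1}'=b_{i-1}+d$ and $b_i'=b_i-d$, so $b_{i-1}'+b_i'=b_{i-1}+b_i$ is preserved. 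I would split into subcases according to the signs of $b_{i-1}'$ and $b_i'$, and use $d<b_i-b_{i-1}$ in each to verify $|b_{i-1}'|+|b_i'|<|b_{i-1}|+|b_i|$, yielding the strict decrease of $\sum_j|b_j|$ and hence of $\cC$.
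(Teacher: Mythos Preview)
Your proposal is correct and follows essentially the same line as the paper's own proof: identify Cases~I and~II with the action of $\sigma_i^{-1}$ and $\sigma_i$ respectively (so that centrality and $X$ are preserved automatically), observe that subcases a) swap $b_{i-1}$ and $b_i$ and hence decrease $i(a;\,b)$, and for subcases b) run a sign-based case split on $b_{i-1}'$ and $b_i'$ to show $|b_{i-1}'|+|b_i'|<|b_{i-1}|+|b_i|$. The paper phrases that last case split in terms of the position of $|a_{i-1}-a_i|$ relative to $-b_{i-1}$ and $b_i$, which is exactly your sign dichotomy. Your extra paragraph on the boundary cases $i=1$ and $i=n-1$ is not actually needed: once the formulas are recognised as those of Theorem~\ref{thm:update}, centrality of $(a';\,b')$ follows directly, so vacuity of Ia)/IIa) at the ends is a consequence rather than something requiring separate verification.
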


\begin{proof}
The coordinates $(a';\,b')$ in the statement of the lemma are obtained
from~$(a;\,b)$ by applying the braid generator $\sigma_i^{-1}$
(case~I) or $\sigma_i$ (case~II), using the inequalities pertaining to
each case to resolve some of the maxima and minima
in~(\ref{eq:pos-update}) and~(\ref{eq:neg-update}). Therefore
$(a';\,b')$ are the Dynnikov coordinates of a central integral
lamination with $X(a';\,b') = X(a;\,b)$, and it only remains to show
that the complexity has decreased.

In all cases we have $n(a';\,b') = n(a;\,b)$. In cases Ia) and IIa) we
also have $\sum_{i=0}^{n-1}|b_i'| = \sum_{i=0}^{n-1}|b_i|$: however
$i(a';\,b') = i(a;\,b)-1$, since the first positive component of $b'$
is $b'_{i-1}$. Therefore $\cC(a';\,b')<\cC(a;\,b)$.

In case Ib) we have that $b_{i-1}<0$, $b_i>0$, and $0<a_{i-1}-a_i<
b_i-b_{i-1}$, and we proceed by considering cases.
\begin{enumerate}[a)]
\item If $a_{i-1}-a_i \le \min(-b_{i-1},\, b_i)$ then 
\[
\begin{aligned}
|b_{i-1}'| + |b_i'| &= (-b_{i-1} - (a_{i-1}-a_i)) + (b_i -
(a_{i-1}-a_i)) \\
&= |b_{i-1}| +  |b_i| - 2(a_{i-1}-a_i) < |b_{i-1}| + |b_i|.
\end{aligned}
\]
  
\item If $-b_{i-1} < a_{i-1}-a_i \le b_i$ then
\[
\begin{aligned}
|b_{i-1}'| + |b_i'| &= (b_{i-1} + (a_{i-1}-a_i)) + (b_i -
(a_{i-1}-a_i))\\
&= b_{i-1} + b_i < |b_{i-1}| + |b_i|.
\end{aligned}
\]

\item If $b_i < a_{i-1}-a_i \le -b_{i-1}$ then
\[
\begin{aligned}
|b_{i-1}'| + |b_i'| &= (-b_{i-1} - (a_{i-1}-a_i)) + (-b_i +
(a_{i-1}-a_i))\\
&= -b_{i-1} - b_i < |b_{i-1}| + |b_i|.
\end{aligned}
\]

\item If $\max(-b_{i-1},\, b_i) < a_{i-1}-a_i$ then
\[
\begin{aligned}
|b_{i-1}'| + |b_i'| &= (b_{i-1} + (a_{i-1}-a_i)) + (-b_i +
(a_{i-1}-a_i)) \\
&< b_{i-1} - b_i + 2(b_i-b_{i-1}) = b_i-b_{i-1} = |b_{i-1}| + |b_i|.
\end{aligned}
\]
\end{enumerate}
Therefore $\cC(a';\,b') < \cC(a;\,b)$. A similar argument applies in case~IIb).
\end{proof}

\subsection{Statement of the algorithm}
\label{sec:alg-state}
Algorithm~\ref{alg:algorithm} below computes the number of 
components of an integral lamination $\cL\in\cL_n$. We assume that
$n>3$, since otherwise the number of components is given by
Lemma~\ref{lem:3-punctures}. The algorithm works with a pair
$((a;\,b),\, Y)$, where $(a;\,b)$ are extended Dynnikov coordinates
and~$Y$ is a non-negative integer which counts the number of
elementary curve components which have been erased: the quantity
$X(a;\,b) + Y$ remains constant throughout.

\begin{alg}
\label{alg:algorithm}
Let $(a;\,b)\in\Z^{2n-4}$ be the Dynnikov coordinates of an integral
lamination $\cL\in\cL_n$, with $n>3$.
\begin{description}
\item[Step 1] Replace $(a;\,b)$ with $(a;\,b)\in\Z^{2n}$ given
  by~(\ref{eq:extend}). Set $Y=0$ and input the pair $((a;\,b),\,Y)$
  to Step~2.

\item[Step 2] \textbf{If} $b_i=0$ for some~$i$, then let $(a';\,b')$
  be given by {\em Filling in a puncture}
  (Lemma~\ref{lem:fill-puncture}). If \mbox{$n(a';\,b')=3$} then input
  $((a';\,b'),\,Y)$ to Step~5: if $n(a';\,b')>3$ then input
  $((a';\,b'),\,Y)$ to Step~2.

\textbf{Otherwise}, input $((a;\,b),\,Y)$ to Step~3.

\item[Step 3] Let $i=i(a;\,b)$. \textbf{If} $a_{i-1}=a_i$, then let
  $(a';\,b')$ be given by {\em Erasing elementary components}
  (Lemma~\ref{lem:erase-simple}), and input $((a';\,b'),\,
  Y+\min(-b_{i-1},\,b_i))$ to Step~2.

\textbf{Otherwise}, input $((a;\,b),\,Y)$ to Step~4.

\item[Step 4] Let $(a';\,b')$ be given by {\em Untwisting}
  (Lemma~\ref{lem:untwist}). Input $((a';\,b'),\,Y)$ to Step~3 in cases
  Ia) and IIa) of the Lemma, or to Step~2 in cases Ib) and IIb).

\item[Step 5] Since $n(a;\,b)=3$, we have $(a;\,b) =
  (0,a_1,0;\,b_0,b_1,b_2)$. The number of components of the original
  integral lamination is given by
\[
\gcd(a_1,b_1) + Y + \min(-b_0,\, b_2,\, -|a_1|-b_0-b_1^+).
\]
\end{description}
\end{alg}

\begin{proof}
It is immediate from Lemmas~\ref{lem:fill-puncture},
\ref{lem:erase-simple}, and \ref{lem:untwist} that the quantity $X(a\,;b) +
Y$ remains constant throughout the algorithm, and that $\cC(a\,;b)$
decreases each time one of the moves is applied. The algorithm
therefore terminates (i.e.\ reaches Step~5), since there are no
infinite strictly decreasing sequences in $\N^3$; and the
number of components of the starting integral lamination is equal to
$X(a;\,b) + Y$, where $(a;\,b)=(0,a_1,0;\, b_0,b_1,b_2)$ and $Y$ are
the inputs to Step~5. It therefore only remains to show that
\[
X(0,a_1,0;\, b_0,b_1,b_2) = \gcd(a_1,b_1) + \min(-b_0,\, b_2,\, -|a_1|
- b_0 - b_1^+).
\]
Now $(0,a_1,0;\,b_0,b_1,b_2)$ are the Dynnikov coordinates of a central
lamination on $D_5$. Let~$L$ be a minimal representative. We shall
show that the number~$Z$ of components of~$L$ which bound a disk
containing the three central punctures is given by
$Z=\min(-b_0,\,b_2,\,-|a_1|-b_0-b_1^+)$. This will complete the proof, since
erasing these components and filling in the two end punctures yields
a representative of an integral lamination on~$D_3$ with Dynnikov
coordinates $(a_1;\,b_1)$, which has $\gcd(a_1,\,b_1)$
components by Lemma~\ref{lem:3-punctures}.

Now $Z$ is the minimum of the number of left loop components
in~$\Delta_0$; the number of right loop components in~$\Delta_2$; the
number of above components in~$\Delta_1$; and the number of below
components in~$\Delta_1$. The first two of these numbers are $-b_0$
and $b_2$; and the third and fourth are $\alpha_1 - |b_1|$ and $\alpha_2
- |b_1|$. 

Suppose first that $b_1\ge 0$. Since the lamination is central, we
have $\beta_0=0$, and hence $\beta_1 = -2b_0$
by~(\ref{eq:dynn-coords}). Then~(\ref{eq:alpha}) gives that $\alpha_1
= -a_1-b_0$ and $\alpha_2 = a_1-b_0$. Therefore
\[
Z = \min(-b_0,\, b_2,\, -a_1-b_0-b_1,\, a_1-b_0-b_1) = \min(-b_0,\, b_2,\,
-|a_1| - b_0 - b_1^+)
\]
as required. A similar argument applies in the case $b_1\le 0$, when
$\alpha_1 = -a_1-b_0-b_1$ and $\alpha_2 = a_1-b_0-b_1$.
\end{proof}

\subsection{Complexity of the algorithm}
In this section we analyse the complexity of the algorithm, when
applied to an integral lamination $\cL\in\cL_n$ with Dynnikov
coordinates $(a;\,b)\in\Z^{2n-4}$. 

Write $M=\sum_{i=1}^{n-2}(|a_i|+|b_i|)$. By an {\em arithmetic
  operation} we mean adding, subtracting, comparing, taking the maximum, or
taking the minimum of two integers. As we will see, these integers
have absolute value $O(n^2M^2)$ thoughout the algorithm,
so that the cost of each arithmetic operation is logarithmic in $n$
and $M$.

Steps~1 and~5 are each carried out only once in the algorithm. Step~1
involves $O(n^2)$ arithmetic operations, while Step~5 involves
$O(\log(n^2M^2))$ arithmetic operations (to calculate the greatest
common divisor). Observe that the Dynnikov coordinates $(a';\,b')$
produced by Step~1 satisfy
\[
M' := \sum_{i=0}^{n-1}(|a'_i| + |b'_i|) \le M + 2
\max_{1\le k\le n-2}
\left(
|a_k| + b_k^+ + \sum_{j=1}^{k-1} b_j
\right) + \sum_{j=1}^{n-2}|b_j| = O(M).
\]

Now consider the main body of the algorithm, consisting of Steps~2,~3,
and~4. This can naturally be regarded as a loop: at each iteration,
$O(n)$ arithmetic operations are carried out to scan the~$b$
coordinates and identify whether some $b_i=0$; and, if not, to find
$i=i(a;\,b)$ and to determine whether or not $a_{i-1}=a_i$. According
to the results of these tests, one of the three moves {\em Filling in
  a puncture}, {\em Erasing elementary components}, or {\em
  Untwisting} is carried out.

Each of these three moves involves $O(1)$ arithmetic operations; and
none of them increases $\sum_{i=1}^{n-1}|b_i|$.

 {\em Filling in a puncture} is carried out $O(n)$ times during the
 course of the algorithm.

{\em Erasing elementary components} strictly decreases
$\sum_{i=1}^{n-1}|b_i|$, and so is carried out $O(M)$ times during the
course of the algorithm.

{\em Untwisting} strictly decreases $\sum_{i=1}^{n-1}|b_i|$ in cases
Ib) and IIb), so these cases are carried out $O(M)$ times during the
course of the algorithm. Cases Ia) and IIa) leave
$\sum_{i=1}^{n-1}|b_i|$ constant. However, since they decrease
$i(a;\,b)$ by exactly~1, these cases are repeated $O(n)$ times
before either {\em Erasing elementary components} or Case Ib) or IIb)
is applied. Moreover, no scanning of the~$b$ coordinates is necessary
between successive applications of these cases.

The main body of the algorithm therefore involves $O(n^2M)$ arithmetic
operations. Observing that none of the moves increases
$\max_{i=0}^{n-1}|a_i|$ by more than $\sum_{i=1}^{n-1}|b_i|$ (which
remains $\le M$ throughout the algorithm), we see that the maximum
size of the integers involved in arithmetic operations is $O(n^2M^2)$.  

We therefore have the following result:
\begin{lem}
\label{lem:complexity}
Let $(a;\,b)$ be the Dynnikov coordinates of an integral lamination
$\cL\in\cL_n$ with $n>3$, and write $M =
\sum_{i=1}^{n-2}(|a_i|+|b_i|)$.  Then applying
Algorithm~\ref{alg:algorithm} to~$\cL$ requires $O(n^2M)$ arithmetic
operations, each carried out on a pair of integers of sizes
$O(n^2M^2)$.
\end{lem}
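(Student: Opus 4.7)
The plan is to formalize the accounting already sketched in the paragraphs immediately preceding the lemma, separating the cost of Algorithm~\ref{alg:algorithm} into one-time costs and the cost of the main loop, and then verifying the size bound on the integers involved.

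First I would handle the one-time costs. Step~1 evaluates the maximum in~(\ref{eq:extend}) together with the sum defining $b_{n-1}$, which takes $O(n^2)$ arithmetic operations; a short estimate of the sort given just after Step~1 in the text above shows that the extended coordinates $(a';b')$ produced by Step~1 satisfy $\sum_{i=0}^{n-1}(|a'_i|+|b'_i|) = O(M)$. Step~5 requires $O(\log M)$ operations for the gcd plus $O(1)$ for the remaining minima. Both contributions are absorbed into the final bound, so the heart of the argument is the main loop in Steps~2--4.

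Next I would count moves. \emph{Filling in a puncture} strictly decreases $n(a;b)$, so it occurs at most $n-3$ times. \emph{Erasing elementary components} and the cases Ib) and IIb) of \emph{Untwisting} each strictly decrease $\sum_{i=0}^{n-1}|b_i|$ by Lemmas~\ref{lem:erase-simple} and~\ref{lem:untwist}; since this sum is $O(M)$ after Step~1 and is never increased by any move, these moves occur $O(M)$ times in total. The remaining cases Ia) and IIa) of \emph{Untwisting} preserve $\sum|b_i|$ but strictly decrease $i(a;b)$ by exactly one, so they can occur at most $n-1$ times in a row before one of the other moves is triggered. The per-iteration cost of the main loop is then dominated by the $O(n)$ operations needed to scan the $b$-coordinates for a zero entry and, failing that, to locate $i(a;b)$ and compare $a_{i-1}$ with $a_i$. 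The key observation, already flagged in the text above, is that no fresh scan is needed between successive applications of cases Ia) and IIa) of \emph{Untwisting}: each such move simply decrements $i(a;b)$ by one and updates four coordinates, so the next test is $O(1)$. Grouping the main loop into blocks consisting of a run of consecutive Ia)/IIa) moves terminated by a move of any other kind, the number of blocks is the number of moves other than Ia)/IIa), which is $O(n+M)$, and each block costs $O(n)$ for its single scan plus $O(n)$ for its at most $n$ constant-cost moves, yielding $O(n^2M)$ arithmetic operations in total.

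The remaining task, and the one I expect to be the most delicate, is the $O(n^2M^2)$ size bound on the integers. Since $\sum_i|b_i|$ is always $O(M)$, every $|b_i|$ is $O(M)$. For the $a$-coordinates, inspection of the update formulas in Lemmas~\ref{lem:erase-simple} and~\ref{lem:untwist} shows that every move changes each $a_i$ by at most $\sum_i|b_i|=O(M)$; the total number of moves is $O(n^2M)$ when the Ia)/IIa) cases are counted individually, so $\max_i|a_i|$ can grow by at most $O(n^2M^2)$ during the entire run of the algorithm. This bound is loose but suffices, and it is the only place in the analysis where I would write out the arithmetic carefully, since the preceding steps are essentially bookkeeping against the complexity function $\cC$.
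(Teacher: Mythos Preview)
Your proposal is correct and follows essentially the same approach as the paper: you separate the one-time costs of Steps~1 and~5 from the main loop, count the moves of each type against $n$ and $\sum|b_i|$, exploit the observation that no rescan is needed between successive applications of cases~Ia)/IIa), and bound the integer sizes by noting that each move increases $\max_i|a_i|$ by at most $\sum_i|b_i|=O(M)$. Your explicit ``block'' decomposition of the main loop is a slightly cleaner way of packaging the same accounting the paper gives in prose, but the arguments are otherwise identical.
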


\subsection{An example}
\label{sec:example}
In this section we use Algorithm~\ref{alg:algorithm} to compute the
number of components of the integral lamination $\cL\in\cL_6$ with
Dynnikov coordinates $\rho(\cL) =
(-1,\,-2,\,-2,\,1\,;\,\,-1,\,2,\,-2,\,2)$. The successive moves are
illustrated in Figure~\ref{fig:example}. 
\begin{enumerate}[1.]
\item {\em Extend coordinates}: apply~(\ref{eq:extend}) to
  replace the coordinates with \[(a;\,b) =
  (0,\,-1,\,-2,\,-2,\,1,\,0\,;\,\,-3,\,-1,\,2,\,-2,\,2,\,2),\] and
  input $((a;\,b),\,0)$ to the main algorithm (Step~2).

\item {\em Untwisting}: we have $b_i\not=0$ for all~$i$, so we proceed
  to Step~3. The first positive $b$ coordinate is $b_2$, so we have
  $i(a;\,b) = 2$. Since $a_1\not=a_2$ we proceed to Step~4. 

Since $a_1-a_2=1>0$ and $b_2-b_1=3>a_1-a_2$ we are in Case Ib). We get
\[(a;\,b) = (0,\,-1,\,-2,\,-2,\,1,\,0\,;\,\,-3,\,0,\,1,\,-2,\,2,\,2),\]
and input $((a;\,b),\,0)$ to Step~2.

\item {\em Fill in puncture 2}: since $b_1=0$ we fill in
  puncture~2. We get 
\[(a;\,b) = (0,\,-2,\,-2,\,1,\,0\,;\,\,-3,\,1,\,-2,\,2,\,2),\]
 and input
  $((a;\,b),\,0)$ to Step~2.

\item {\em Untwisting}: we have $b_i\not=0$ for all~$i$, so we proceed
  to Step~3. The first positive $b$ coordinate is $b_1$, so we have
  $i(a;\,b) = 1$. Since $a_0\not=a_1$ we proceed to Step 4.

Since $a_0-a_1 = 2 > 0$ and $b_1-b_0=4>a_0-a_1$ we are again in Case
Ib). We get
\[
(a;\,b) = (0,\,-1,\,-2,\,1,\,0\,;\,\,-1,\,-1,\,-2,\,2,\,2),
\]
and input $((a;\,b),\,0)$ to Step~2.

\item {\em Untwisting}: we have $b_i\not=0$ for all~$i$, so we proceed
  to Step~3. The first positive~$b$ coordinate is $b_3$, so we have
  $i(a;\,b)=3$. Since $a_2\not=a_3$, we proceed to Step~4.

Since $a_3-a_2 = 3>0$ and $b_3-b_2 = 4>a_3-a_2$ we are in Case
IIb). We get
\[
(a;\,b) = (0,\,-1,\,-1,\,0,\,0\,;\,\,-1,\,-1,\,1,\,-1,\,2).
\]

\item {\em Erasing an elementary component}: we have $b_i\not=0$ for
  all~$i$, so we proceed to Step~3. The first positive $b$ coordinate
  is $b_2$, so we have $i(a;\,b)=2$. Since $a_1=a_2$, we can erase
  elementary components about punctures $2$ and $3$. Since $\min(-b_1,
  b_2)=1$, there is~1 such component. We get
\[
(a;\,b) = (0,\,-1,\,-1,\,0,\,0\,;\,\,-1,\,0,\,0,\,-1,\,2),
\]
and input $((a;\,b),\,1)$ to Step~2.

\item {\em Fill in punctures 2 and 3}: since $b_1=b_2=0$ we fill in
  punctures~2 and~3 (following the algorithm strictly, we first fill in
  puncture~2, and then fill in the new puncture~2, which is the
  previous puncture~3). We get 
\[
(a;\,b) = (0,\,0,\,0\,;\,\,-1,\,-1,\,2).
\]
Since $n(a;\,b)=3$, we input $((a;\,b),\,1)$ to Step~5.

\begin{figure}
\begin{center}
\includegraphics[height=0.75\textheight]{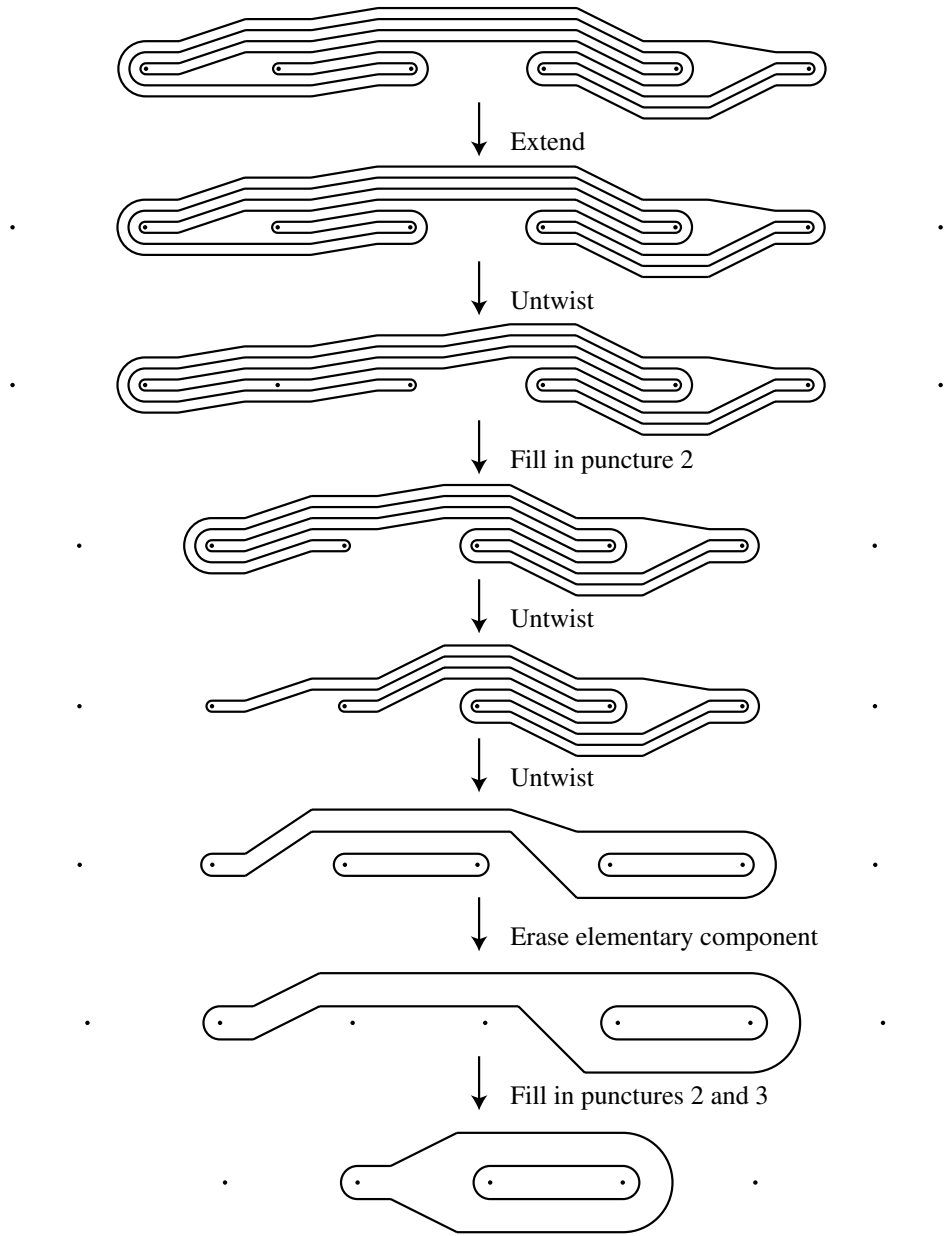}
\caption{An example of the algorithm}
\label{fig:example}
\end{center}
\end{figure}

\item {\em Number of components}: we have $Y=1$; $\gcd(a_1,b_1) =
  \gcd(0,-1) = 1$; and $\min(-b_0,\,b_2,\,-|a_1|-b_0-b_1^+) =
  \min(1,\,2,\,1) = 1$ (corresponding to the fact that our final
  lamination has one component bounding a disk containing the three
  central punctures).

Therefore $X(-1,\,-2,\,-1,\,0\,;\,\,-1,\,2,\,1,\,-1) = 3$.
\end{enumerate}

\bibliographystyle{plainnat}

\end{document}